\documentclass{amsart}
\pdfoutput=1

\usepackage{microtype}
\usepackage{amssymb,latexsym}
\usepackage[mathscr]{eucal}
\usepackage{mathtools}
\usepackage{amsthm}
\usepackage{enumitem}
\usepackage{hyperref}
\usepackage{mleftright}

\DeclareMathOperator{\scal}{Sc}
\DeclareMathOperator{\two}{II}
\DeclareMathOperator{\tr}{tr}

\theoremstyle{plain}
\newtheorem{theorem}{Theorem}[section]
\newtheorem{proposition}[theorem]{Proposition}

\newtheorem{lemma}[theorem]{Lemma}

\theoremstyle{remark}

\theoremstyle{definition}

\newtheorem{question}[theorem]{Question}

\begin{document}
	
\title[Normal curvature of tori]{Normal curvature of immersed tori \\of dimension at most $18$}

\author{Matteo Raffaelli}
\address{School of Mathematics, Georgia Institute of Technology, Atlanta, Georgia 30332}
\email{raffaelli@math.gatech.edu}
\date{September 15, 2026}
\subjclass[2020]{Primary 53C40; Secondary 53C21, 53C42}
\keywords{Bounded immersions, conformal Laplacian, normal curvature, scalar curvature, spherical averaging, Sturm comparison}

\begin{abstract} 
For $n\leq 18$, we prove that any smooth immersion of the $n$-torus into the closed unit ball in $\mathbb R^q$ has a point at which the spherical average of $\lvert \two(v,v)\rvert^2$ is at least $3n/(n+2)$. This answers a question of Petrunin in these dimensions. The proof combines the scalar curvature obstruction for the torus with a conformal Laplacian argument, reducing the problem to a one-dimensional differential inequality. We also show that this reduction cannot yield the result in dimensions $n\geq19$.
\end{abstract}
\maketitle

\section{Introduction}

A smooth torus $T^n$ immersed in the closed unit ball $B^q \subset \mathbb R^q$ cannot be too flat: by a theorem of Petrunin~\cite{petrunin2024}, there exists a point $p\in T^n$ and a unit-speed geodesic of $T^n$ whose ambient curvature is at least $\sqrt{3n/(n+2)}$ at $p$. The bound is sharp, with equality holding along every unit-speed geodesic of Gromov’s flat tori~\cite[3.B]{gromov2022}, \cite[2.A]{gromov2023}.

In fact, the proof establishes more. Let $\kappa(p)$ be the average of $\lvert \two(v,v)\rvert^2$ over unit vectors in $T_p T^n$, where $\two$ denotes the second fundamental form. Petrunin showed that if $n\leq 4$, or if $\lvert \two(v,v)\rvert \leq2$ for all unit vectors in the tangent bundle $TT^n$, then $\max\kappa \geq 3n/(n+2)$. Informally, there is a point at which the immersion is sufficiently bent not merely in one direction, but in an averaged sense over all directions. It is natural to ask whether the same conclusion holds without either assumption.

\begin{question}[{\cite[Q.~1.3]{petrunin2024}}]\label{q:petrunin}
Is it true that for every smooth immersion $T^n\looparrowright B^q$, the inequality $\kappa(p) \geq 3n/(n+2)$ holds at some point $p\in T^n$?
\end{question}

Here we answer this question affirmatively in all dimensions $n\leq 18$.

\begin{theorem}\label{thm:main}
Let $T^n\looparrowright B^q$ be a smooth immersion. If $1\leq n\leq 18$, then there exists a point $p\in T^n$ such that $\kappa(p) \geq 3n/(n+2)$.
\end{theorem}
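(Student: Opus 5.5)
Throughout, write $x\colon T^n\to B^q$ for the position vector, $H=\tr\two$ for the (unnormalized) mean curvature vector, and $\scal$ for the scalar curvature of the induced metric $g$. I argue by contradiction: I assume $\kappa<3n/(n+2)$ everywhere on $T^n$ and try to build a metric in the conformal class of $g$ with positive scalar curvature, which cannot exist on a torus.

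\emph{Step 1: translate the hypothesis into a lower bound on $\scal$.} The spherical average of a quartic form gives
\[
\kappa=\frac{\lvert H\rvert^2+2\lvert\two\rvert^2}{n(n+2)}.
\]
The Gauss equation gives $\scal=\lvert H\rvert^2-\lvert\two\rvert^2$. Eliminating $\lvert\two\rvert^2$ yields $\kappa=(3\lvert H\rvert^2-2\scal)/(n(n+2))$. So the assumption $\kappa<3n/(n+2)$ becomes the pointwise inequality
\[
\scal>\tfrac32\bigl(\lvert H\rvert^2-n^2\bigr).
\]
On the other side, $T^n$ admits no metric of positive scalar curvature (Schoen--Yau, Gromov--Lawson). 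Hence for $n\ge3$ the conformal Laplacian $L=-\tfrac{4(n-1)}{n-2}\Delta+\scal$ has nonpositive first eigenvalue. Thus
\[
\int_{T^n}\Bigl(\tfrac{4(n-1)}{n-2}\lvert\nabla u\rvert^2+\scal\,u^2\Bigr)\le0
\qquad\text{for some } u>0
\]
(the first eigenfunction), and indeed the quadratic form is not positive definite. For $n\le2$ I use Gauss--Bonnet or the trivial case directly. The goal is therefore to exhibit a test function $u$ making this quadratic form positive.

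\emph{Step 2: radial test functions and elimination of $H$.} I take $u=\psi(r)$ with $r=\lvert x\rvert^2\in[0,1]$. I use two facts:
\[
\Delta r=2n+2\langle x,H\rangle,\qquad \lvert\nabla r\rvert^2\le4r.
\]
The $\lvert H\rvert^2$ coming from $\scal$ is only favorable, so I need a term linear in $H$ to absorb against it. Integrating by parts $\int\Phi'(r)\,\Delta r=-\int\Phi''(r)\lvert\nabla r\rvert^2$ for an auxiliary primitive $\Phi$ produces exactly such a term $\int\Phi'(r)\langle x,H\rangle$. Adding a suitable multiple of this identity and completing the square,
\[
\tfrac32\lvert H\rvert^2\psi^2+2\Phi'\langle x,H\rangle\ge-\tfrac{2}{3}\,\frac{\Phi'^2 r}{\psi^2},
\]
eliminates $H$ entirely. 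Pointwise, everything then depends only on $r$ and on $\lvert\nabla r\rvert^2\in[0,4r]$. Since the integrand is affine in $\lvert\nabla r\rvert^2$, it suffices to check positivity at the two endpoints. This reduces the problem to finding functions $\psi>0$ and $\Phi$ on $[0,1]$ such that two explicit one-dimensional expressions are positive. After substituting $\Phi'=\psi\,w$ (or a Riccati-type substitution), this becomes a single second-order linear differential inequality for $\psi$ on $[0,1]$, with a regular singular point at $r=0$ and the constant $\tfrac32 n^2$ together with the coefficient $\tfrac{4(n-1)}{n-2}$ entering.

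\emph{Step 3: the ODE (the main obstacle).} I must show this inequality admits a positive solution on all of $[0,1]$ exactly when $n\le18$. I would first pass to the associated equation, which should be of hypergeometric or Bessel type after the change of variable $s=\sqrt r$, and solve it with the solution regular at $0$. By Sturm comparison, a positive strict supersolution exists iff this solution has no zero in $(0,1]$, with the boundary behavior at $r=1$ handled so that no boundary term has the wrong sign. The difficulty is to locate the first zero as a function of $n$ and show it exceeds $1$ precisely for $n\le18$. I would do this via an explicit series or an oscillation estimate, checking the finitely many borderline dimensions near $18$ by direct (rigorous) evaluation. The converse statement quoted in the abstract corresponds to this zero entering $(0,1)$ for $n\ge19$.
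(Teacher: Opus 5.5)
Your Steps~1--2 cannot produce a contradiction. The absence of positive-scalar-curvature metrics on $T^n$ says that $Q(\varphi)=\int c_n\lvert\nabla\varphi\rvert^2+\scal\,\varphi^2$ is \emph{not} positive definite. Showing $Q(u)>0$ for one particular radial $u=\psi(r)$ is fully compatible with that; for example, $Q(1)=\int\scal$ is positive for many non-flat metrics on $T^n$. You need $Q(\varphi)>0$ for \emph{every} $\varphi\neq0$. Equivalently, you need a positive $u$ with $Lu>0$ pointwise, so that $u^{4/(n-2)}g$ has positive scalar curvature; this is what the paper proves. Your auxiliary identity $\int\Phi'(r)\Delta r=-\int\Phi''(r)\lvert\nabla r\rvert^2$ integrates the divergence $\mathrm{div}(\Phi'(r)\nabla r)$, which is not zero pointwise. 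So pointwise positivity of your modified integrand only certifies $Q(\psi(r))>0$.

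The idea can be repaired. For arbitrary $\varphi$ and $V=\Phi'(r)\nabla r$, integrate $\mathrm{div}(\varphi^2V)=0$ and absorb $2\varphi\langle\nabla\varphi,V\rangle$ into $c_n\lvert\nabla\varphi\rvert^2$. This gives $Q(\varphi)\geq\int\varphi^2\bigl(\scal+\mathrm{div}V-\lvert V\rvert^2/c_n\bigr)$. The bracket is exactly $Lu/u$ for $u=e^{-\Phi(r)/c_n}$; the paper's $e^{-\psi(\rho)}$ with $\rho=r/2$ corresponds to $\Phi(r)=c_n\psi(r/2)$. Your second function then disappears, and the favorable term $c_n\psi'^2\lvert\nabla r\rvert^2$ is replaced by the unfavorable $-\Phi'^2\lvert\nabla r\rvert^2/c_n$. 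You land on the paper's pointwise condition $R_n\geq0$ with a single unknown.

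Even after this repair, two problems remain.

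First, your completion of the square throws away too much. Since $H$ is normal, it produces $-\tfrac23\Phi'^2\lvert x^\perp\rvert^2$ with $\lvert x^\perp\rvert^2=r-\tfrac14\lvert\nabla r\rvert^2$. Bounding this by $r$ discards a positive multiple of $\lvert\nabla r\rvert^2$. In the paper's variables $s=(1-r)/2$, $t=\tfrac14\lvert\nabla r\rvert^2$, this replaces $\lambda_n$ in the endpoint condition at $t=1-2s$ by $\lambda_n+n/2=3n(n-2)/(4(n-1))$. Rerun the paper's Bessel--Sturm non-existence argument with $\gamma^2=n^2(5n-8)/(8(n-1))$ in place of $\gamma_n^2$. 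It shows that no admissible $f$ with $f(0)=0$ exists already for $n=5$, since $\gamma=5\sqrt{17/32}\approx3.644>j'_{5/2,1}\approx3.633$, and hence none exists for any $n\geq5$.

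Second, Step~3 is a plan, not a proof, and its description is inaccurate. The reduction yields two competing conditions, $f^2\le2s$ at $t=0$ and a Riccati inequality at $t=1-2s$, not one linear differential inequality. The relevant Bessel solution is normalized at the outer sphere, where the corner $(s,t)=(0,0)$ forces $f(0)=0$, i.e.\ $\tilde y'(\gamma_n)=0$. So the threshold is governed by the first zero of $J_\nu'$, not by zeros of the solution regular at the origin.

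Above all, the existence half for $n\le18$, which is the actual content of the theorem, is not carried out. The paper supplies it with the explicit choice $f_{18}(s)=\lambda_{18}s/(1+\tfrac94s-s^2)$, monotonicity of both endpoint conditions in $n$, and a positivity check for a quartic. It uses the Bessel comparison only for the negative statement when $n\geq19$. Your claim that your reduction has threshold exactly $18$ is taken from the abstract rather than derived.
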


The proof, given in section~\ref{sec:proof}, builds on Petrunin's conformal Laplacian argument, which yields the result for $n\leq 4$. The main new idea is to allow the radial test function $u$ to vary rather than fixing it in advance. Using the spherical averaging identity, the Gauss equation, and the confinement of the immersion to the unit ball, we reduce the condition $Lu>0$, where $L$ denotes the conformal Laplacian, to a one-dimensional problem: finding a function that remains sufficiently small while growing sufficiently rapidly.

The dimension bound $n\leq 18$ is not an artifact of the particular ansatz used in the proof. In section~\ref{sec:18}, we show that for $n\geq 19$ the resulting one-dimensional inequalities cannot be satisfied simultaneously. Thus $18$ is the sharp threshold for this method, while the problem remains open in higher dimensions.

It is worth noting that the torus enters the argument only through the obstruction to positive scalar curvature. Consequently, Theorem~\ref{thm:main} remains valid with $T^n$ replaced by any closed $n$-manifold that admits no metric of positive scalar curvature.

Question~\ref{q:petrunin} is one instance of the broader problem of understanding how extrinsically flat a closed immersed manifold can remain when confined to a bounded region. This problem has a substantial classical history~\cite{fary1950, chakerian1962, aminov1973, hasanis1979, jorge1981, jorge1981b}, and has received renewed attention through Gromov’s recent work on immersions with controlled curvature~\cite{gromov2022, gromov2022b, gromov2023, gromov2025}. Further contributions include work of Petrunin~\cite{petrunin2024, petrunin2024b}, Mendes~\cite{mendes2025}, Chodosh--Li~\cite{chodosh2026}, Chow--Wan~\cite{chow2026, chow2026II}, and the author~\cite{raffaelli2026}.

\section{Proof of Theorem~\ref{thm:main}}\label{sec:proof}

We prove the theorem for $5\leq n\leq 18$, since the case $n\leq 4$ is Petrunin's theorem~\cite[Thm.~1.2(d)]{petrunin2024}. We argue by contradiction. Recall that the torus admits no metric of positive scalar curvature~\cite{gromov1980}. Let $L$ be the conformal Laplacian of the induced metric $g$,
\begin{equation}\label{eq:laplacian}
L = -c_n\Delta+\scal,\qquad c_n= \frac{4(n-1)}{n-2},
\end{equation}
where $\Delta$ and $\scal$ denote the Laplace--Beltrami operator and scalar curvature of $g$, respectively. If $u>0$, then the conformally related metric
\begin{equation*}
\widehat g = u^{4/(n-2)} g
\end{equation*}
has scalar curvature
\begin{equation*}
\widehat \scal=u^{-(n+2)/(n-2)} Lu;
\end{equation*}
see \cite[p.~146]{aubin1998}. Consequently, there can be no positive smooth function $u$ on $T^n$ satisfying $Lu >0$ everywhere.

By Petrunin's spherical averaging identity,
\begin{equation*}
\lvert \two\rvert^2 =  \frac{n(n+2)\kappa - \lvert H\rvert^2}{2},
\end{equation*}
where $H=\tr \two$ is the mean curvature vector. Combining this identity with the Gauss equation
\begin{equation*}
\scal= \lvert H\rvert^2 - \lvert \two\rvert^2
\end{equation*}
gives
\begin{align*}
\scal &= \lvert H\rvert^2 - \frac{n(n+2)\kappa - \lvert H\rvert^2}{2}\\
&= \frac{3}{2} \lvert H\rvert^2 -\frac{1}{2}n(n+2)\kappa.
\end{align*}
Suppose, contrary to the conclusion of the theorem, that 
\begin{equation*}
\kappa(p) < \frac{3n}{n+2}
\end{equation*}
at every point $p\in T^n$. Define
\begin{equation*}
\delta= \frac{3n^2-n(n+2)\kappa}{2}.
\end{equation*}
Then $\delta$ is a positive smooth function, and 
\begin{equation}\label{eq:scal}
\scal=\frac{3}{2} (\lvert H\rvert^2 -n^2)+ \delta.
\end{equation}
We will use $\delta$ to construct a function $u>0$ for which $Lu>0$, obtaining the desired contradiction.

Let $X$ denote the immersion $T^n\looparrowright B^q$, and set $\rho = \lvert X\rvert^2/2$. Recall that 
\begin{equation*}
\nabla \rho =X^\top,\qquad \Delta \rho = n +\langle X,H\rangle = n + \langle X^\perp,H\rangle,
\end{equation*}
where $X^\top$ and $X^\perp$ denote the orthogonal projections of $X$ onto the tangent and normal bundles of $T^n$. For convenience in computing $\Delta u/u$, we take a radial function of the form 
\begin{equation*}
u= e^{-\psi\circ\rho},
\end{equation*}
with $\psi\colon\mathbb R\to\mathbb R$ smooth. Then
\begin{align}
\frac{\Delta u}{u} &= ((\psi'\circ\rho)^2 -\psi''\circ\rho ) \lvert \nabla\rho\rvert^2 -\psi'\circ\rho\,\Delta\rho \notag\\
&=((\psi'\circ\rho)^2 -\psi''\circ\rho ) \lvert X^\top\rvert^2 -\psi'\circ\rho\,(n +\langle X^\perp,H\rangle).\label{eq:delta}
\end{align}
Substituting \eqref{eq:scal} and \eqref{eq:delta} into \eqref{eq:laplacian}, we find
\begin{equation*}
\frac{Lu}{u} = \delta + \frac{3}{2} (\lvert H\rvert^2 -n^2)+c_n\psi'\circ\rho\,(n +\langle X^\perp,H\rangle)+c_n(\psi''\circ\rho - (\psi'\circ\rho)^2) \lvert X^\top\rvert^2.
\end{equation*}
Completing the square in $H$, we obtain
\begin{equation*}
\frac{3}{2}\lvert H\rvert^2 + c_n\psi'\circ\rho\,\langle X^\perp,H\rangle = \frac{3}{2}\mleft\lvert H + \frac{c_n\psi'\circ\rho}{3}X^\perp\mright\rvert^2 - \frac{(c_n\psi'\circ\rho)^2}{6}\lvert X^\perp\rvert^2.
\end{equation*}
Thus
\begin{equation}\label{eq:laplacian2}
\frac{Lu}{u} = \delta + \frac{3}{2}\mleft\lvert H + \frac{c_n\psi'\circ\rho}{3}X^\perp\mright\rvert^2  + R_n,
\end{equation}
where
\begin{equation*}
R_n = n c_n\psi'\circ\rho -\frac{3}{2}n^2 - \frac{(c_n\psi'\circ\rho)^2}{6}\lvert X^\perp\rvert^2+c_n(\psi''\circ\rho - (\psi'\circ\rho)^2) \lvert X^\top\rvert^2.
\end{equation*}
Since the first two terms in \eqref{eq:laplacian2} are respectively positive and nonnegative, it is enough to choose $\psi$ so that $R_n\geq 0$ everywhere.

To express this requirement independently of the immersion, we interpret the functions $\rho =\lvert X\rvert^2/2$ and $\lvert X^\top\rvert^2$ as variables describing their possible values. In fact, we set
\begin{equation*}
s= \frac{1-\lvert X\rvert^2}{2},\qquad t=\lvert X^\top\rvert^2.
\end{equation*}
This choice places the sharp configuration, realized by Gromov's flat tori, at the corner $(s,t)=(0,0)$. Since the torus lies in the unit ball,
\begin{equation}\label{eq:region}
0\leq s\leq \frac{1}{2}, \qquad \quad 0\leq t \leq 1-2s
\end{equation}
and
\begin{equation*}
\lvert X^\perp\rvert^2 = 1-2s-t.
\end{equation*}
Let $\phi(s)= \psi'(1/2-s)$. The remainder becomes 
\begin{equation*}
R_n(s,t)=n c_n\phi(s) -\frac{3}{2}n^2 - \frac{c_n^2\phi(s)^2}{6}(1-2s -t)-c_n (\phi'(s) + \phi(s)^2 ) t.
\end{equation*}
Thus, it suffices to find $\phi$ such that $R_n(s,t)\geq 0$ for every $(s,t)$ in the triangular region~\eqref{eq:region}. Note that any such $\phi$ must be positive, for otherwise $R_n(s,0)<0$. Moreover, at the corner $(s,t)=(0,0)$,
\begin{equation*}
R_n(0,0)= -\frac{1}{6}(c_n\phi(0)-3n)^2,
\end{equation*}
so any admissible $\phi$ must satisfy $c_n\phi(0)=3n$.

We now recast the requirement $R_n\geq 0$ in a more useful form. Since any admissible $\phi$ is positive, we may restrict to $\phi>0$ and divide by $c_n^2\phi^2/6$:
\begin{equation*}
\frac{6R_n(s,t)}{c_n^2\phi(s)^2}= \frac{6 n }{c_n\phi(s)} -\frac{9n^2}{c_n^2\phi(s)^2} - 1 + 2s +t - \frac{6 \phi'(s)}{c_n\phi(s)^2}t - \frac{6}{c_n} t.
\end{equation*}
Completing the square,
\begin{equation*}
\frac{6 n }{c_n\phi(s)} -\frac{9n^2}{c_n^2\phi(s)^2} =1- \mleft(\frac{3n}{c_n\phi(s)}-1 \mright)^2,
\end{equation*}
we obtain
\begin{equation*}
\frac{6R_n(s,t)}{c_n^2\phi(s)^2}= - \mleft(\frac{3n}{c_n\phi(s)}-1 \mright)^2  + 2s +t - \frac{6 \phi'(s)}{c_n\phi(s)^2}t - \frac{6}{c_n} t.
\end{equation*}
This suggests introducing the function
\begin{equation*}
f_n=\frac{3n}{c_n\phi}-1.
\end{equation*}
Under this change of variables, positivity of $\phi$ is equivalent to $f_n>-1$, with the inverse relation
\begin{equation*}
\phi=\frac{3n}{c_n(1+f_n)}.
\end{equation*}
Since $f_n' = -3n\phi'/(c_n\phi^2)$, substitution yields
\begin{align*}
R_n(s,t) &=\frac{3n^2}{2(1+f_n(s))^2}\mleft(-f_n(s)^2 +2s+t +\frac{2f'_n(s)}{n}t -\frac{6}{c_n}t\mright)\\
&= \frac{3n^2}{2(1+f_n(s))^2}\mleft(2s-f_n(s)^2 +\frac{2t}{n}\mleft(f_n'(s)-\frac{n(n-4)}{4(n-1)}\mright) \mright).
\end{align*}
Thus, setting 
\begin{equation*}
\lambda_n = \frac{n(n-4)}{4(n-1)}
\end{equation*}
and
\begin{equation*}
F_n(s,t) =2s-f_n(s)^2 +\frac{2t}{n}(f_n'(s)-\lambda_n),
\end{equation*}
our problem reduces to constructing $f_n>-1$ on $[0,1/2]$ such that $F_n(s,t)\geq 0$ for every admissible pair $(s,t)$.

For fixed $s$, the function $F_n(s,\cdot)$ is affine in $t$, so it is enough to check the two endpoints of the interval $0\leq t\leq 1-2s$:
\begin{equation}\label{eq:endpoint1}
F_n(s,0) = 2s-f_n(s)^2 \geq 0
\end{equation}
and 
\begin{equation}\label{eq:endpoint2}
F_n(s,1-2s) = 2s-f_n(s)^2 +\frac{2(1-2s)}{n}(f_n'(s)-\lambda_n)\geq 0.
\end{equation}
These conditions compete with one another: the first requires 
\begin{equation*}
\lvert f_n(s) \rvert\leq \sqrt{2s},
\end{equation*}
while the second forces $f_n$ to grow sufficiently quickly. In particular, for $s\in[0,1/2)$, \eqref{eq:endpoint2} is equivalent to
\begin{equation*}
f_n'(s) \geq \lambda_n - \frac{n}{2(1-2s)} ( 2s-f_n(s)^2).
\end{equation*}
If \eqref{eq:endpoint1} also holds, then the right-hand side is at most $\lambda_n$.

It remains to construct $f_n$. At $s=0$, \eqref{eq:endpoint1} gives $-f_n(0)^2\geq 0$, hence $f_n(0)=0$. Condition~\eqref{eq:endpoint2} then yields $f_n'(0)\geq \lambda_n$. The simplest candidate is therefore $f_n(s) =\lambda_n s$. This function, however, grows too rapidly, as the inequality $f_n(s)^2\leq 2s$ already fails when $n=12$. We thus consider the rational ansatz
\begin{equation*}
f_n(s)=\frac{\lambda_n s}{as^2+bs+1},
\end{equation*}
where $a,b$ are chosen so that the denominator $d(s)=as^2+bs+1$ is nonzero and $f_n'(s) \leq \lambda_n$ on $[0,1/2]$. For this choice, $f_n >0$ and 
\begin{equation*}
f_n'(s)= \lambda_n\frac{1-as^2}{d(s)^2}.
\end{equation*}
Moreover, for each fixed $s\in[0,1/2]$, both $F_n(s,0)$ and $F_n(s,1-2s)$ are nonincreasing in $n$ for $5\leq n\leq 18$. Indeed, $\lambda_n$ and $\lambda_n/n$ are increasing, while $f'_n/\lambda_n \leq 1$ is independent of $n$, so the claim follows directly from \eqref{eq:endpoint1}--\eqref{eq:endpoint2}. It therefore suffices to find suitable constants $a,b$ for which both inequalities hold when $n=18$; the same choice then works for every $5\leq n\leq 18$.

We first determine a candidate $a$. Substituting $f_n'(s)$ into \eqref{eq:endpoint2}, we obtain
\begin{equation*}
F_n(s,1-2s) = \frac{s}{d(s)^2}P_n (s),
\end{equation*}
where
\begin{equation*}
P_n (s) = 2d(s)^2 -\lambda_n^2 s- \frac{2\lambda_n}{n}(1-2s)(2b+(b^2+3a)s+2abs^2+a^2s^3 ).
\end{equation*}
A computation of $\partial P_n/\partial b$ reveals the distinguished point 
\begin{equation*}
s_n = \frac{n-4}{6(n-2)}
\end{equation*}
at which $P_n$ is independent of $b$. For $n=18$, we also have
\begin{equation*}
P_{18}(0) = 2-\frac{2\lambda_{18}}{18} 2b = 2 -\frac{14}{17}b \geq 0,
\end{equation*}
and hence $b \leq 17/7$. On the other hand, the condition $F_n(1/2,0)\geq 0$ gives
\begin{equation}\label{eq:ab}
\frac{a}{4}+\frac{b}{2} +1 \geq \frac{\lambda_{18}}{2}= \frac{63}{34}.
\end{equation}
Combining this with $b \leq 17/7$ yields
\begin{equation*}
a \geq -\frac{172}{119} \approx -1.445.
\end{equation*}
For an upper bound on $a$, we turn to the distinguished point $s_{18}=7/48$, where
\begin{equation*}
P_{18}(s_{18}) =- \frac{14161a+936}{332928},
\end{equation*}
so the requirement $P_{18}(s_{18})\geq 0$ amounts to
\begin{equation*}
a \leq -\frac{936}{14161} \approx -0.0661.
\end{equation*}
Thus $a=-1$ is a convenient choice. With $a=-1$, \eqref{eq:ab} reduces to $b\geq 75/34$. Together with $b\leq 17/7$, this gives
\begin{equation*}
\frac{75}{34}\leq b\leq \frac{17}{7}.
\end{equation*}
Thus we may set $b =9/4$. Note that for these choices,
\begin{equation*}
d(s) = 1+\frac{9}{4} s -s^2 \geq 1+s
\end{equation*}
on $[0,1/2]$, and hence
\begin{equation*}
f'_n(s)=\lambda_n\frac{1+s^2}{d(s)^2}\leq \lambda_n,
\end{equation*}
as desired.

It remains to verify that the resulting function $f_{18}$ satisfies both \eqref{eq:endpoint1} and \eqref{eq:endpoint2} for every $s\in [0,1/2]$. Since $F_{18}(0,0)=0$, condition~\eqref{eq:endpoint1} reduces, for $s\in(0,1/2]$, to
\begin{equation*}
\lambda_{18}^2 s\leq 2d(s)^2.
\end{equation*}
Because $d(s)>0$ on $[0,1/2]$, this is equivalent to
\begin{equation*}
\frac{d(s)}{\sqrt s} \geq \frac{\lambda_{18}}{\sqrt{2}}.
\end{equation*}
The function $d(s)/\sqrt s$ is decreasing on $(0,1/2]$, so it is enough to check the inequality at $s=1/2$. There it follows from $d(1/2)\geq \lambda_{18}/2$. Hence \eqref{eq:endpoint1} holds throughout $[0,1/2]$. 

We finally verify \eqref{eq:endpoint2}. A direct computation gives
\begin{equation*}
F_{18}(s,1-2s) = \frac{s}{4624 d(s)^2}P_{18}(s),
\end{equation*}
where 
\begin{equation*}
P_{18}(s)= 13056s^4-60656s^3+44744s^2-8679s+680.
\end{equation*}
Thus it suffices to show that $P_{18}(s)\geq 0$ on $[0,1/2]$. Since $P_{18}(1/2)=1521/2>0$, consider $s\in [0,1/2)$ and set
\begin{equation*}
r= \frac{2s}{1-2s}\geq 0.
\end{equation*}
Then
\begin{equation*}
2(1+r)^4 P_{18}\mleft(\frac{r}{2(1+r)}\mright) = 1521r^4 + 8983 r^3 + 4495 r^2 - 3239r + 1360.
\end{equation*}
The quadratic $4495 r^2 - 3239r + 1360$ has negative discriminant and positive leading coefficient, hence is positive for every $r\geq 0$. All remaining terms are nonnegative there, so $P_{18}(s) >0$ on $[0,1/2)$. Together with $P_{18}(1/2)>0$, this proves \eqref{eq:endpoint2}. The preceding reductions thus yield the desired contradiction.

\section{The $n=18$ threshold}\label{sec:18}

Here we explain why $n=18$ is the largest dimension for which this method can work. The obstruction does not come from the particular ansatz used above, but already from the reduction to the one-dimensional conditions \eqref{eq:endpoint1}--\eqref{eq:endpoint2}. 

\begin{proposition}\label{prop:main}
For $n\geq 19$, there does not exist a function $f \in\mathcal C^1([0,1/2))$ with $f(0)=0$ such that
\begin{equation}\label{eq:inequality}
f'(s)\geq \lambda_n-\frac{n}{2(1-2s)}(2s-f(s)^2)\qquad \forall s\in[0,1/2).
\end{equation}
\end{proposition}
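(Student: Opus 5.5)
The plan is to compare $f$ with the solution of the Riccati equation obtained by turning \eqref{eq:inequality} into an equality. I would then linearize that Riccati equation and show, by a Sturm comparison argument, that its solution blows up before $s=1/2$ when $n\geq 19$. Since $f$ is $\mathcal C^1$ and hence finite on $[0,1/2)$, this gives a contradiction.

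\emph{Step 1 (comparison).} Write \eqref{eq:inequality} as $f'\geq r(s)+q(s)f^2$, where
\begin{equation*}
q(s)=\frac{n}{2(1-2s)},\qquad r(s)=\lambda_n-\frac{ns}{1-2s}.
\end{equation*}
Let $y$ be the maximal solution of $y'=r+qy^2$ with $y(0)=0$. Since $f(0)=0$ and $f'(0)\geq\lambda_n>0$, we have $f>0$ just to the right of $0$. Let $s_1$ be the first zero of $f$ in $(0,1/2)$, if there is one. On $(0,s_1)$ we have $f>0$, so $(f-y)'\geq q(f+y)(f-y)$ there. A Gr\"onwall argument then gives $f\geq y$ on the common interval of existence. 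The same holds for $y$: $y'(0)=\lambda_n>0$ makes $y>0$ near $0$. Moreover, on the set where $y>0$, the comparison $f\geq y$ prevents $f$ from hitting zero first. So $f\geq y>0$ wherever $y$ exists. Hence it suffices to show that $y\to+\infty$ at some $s_*<1/2$.

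\emph{Step 2 (linearization).} Substitute $y=-w'/(qw)$ with $w(0)=1$ and $w'(0)=0$. Using $q'/q=2/(1-2s)$, the Riccati equation becomes the linear Sturm--Liouville equation
\begin{equation*}
\bigl((1-2s)w'\bigr)'+\frac{n}{2}\Bigl(\lambda_n-\frac{ns}{1-2s}\Bigr)w=0 ,
\end{equation*}
because the integrating factor is exactly $p=1-2s$. Blow-up of $y$ at $s_*$ is then equivalent to $w$ having a first zero $s_*\in(0,1/2)$. At such a zero $w'<0$, so $y\to+\infty$ there.

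\emph{Step 3 (Sturm/Picone).} Suppose $w>0$ on $[0,s_0]$ for some $s_0<1/2$. Picone's identity, with boundary term $p v^2 w'/w$ vanishing at $0$ because $w'(0)=0$ and at $s_0$ because $v(s_0)=0$, gives
\begin{equation*}
\int_0^{s_0}\Bigl[(1-2s)v'^2-\frac{n}{2}\Bigl(\lambda_n-\frac{ns}{1-2s}\Bigr)v^2\Bigr]\,ds\;\geq\;0
\end{equation*}
for every $v\in\mathcal C^1$ with $v(s_0)=0$. So it suffices to exhibit, for each $n\geq19$, an $s_0<1/2$ and a test function $v$ making this quadratic form negative.

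I would try $v=1-(s/s_0)^2$ or $v=\cos(\pi s/(2s_0))$, and choose $s_0$ somewhat below $(n-4)/(6(n-2))$, where the potential is largest relative to its decay. All the integrals are elementary in $s_0$, since $s/(1-2s)$ integrates explicitly against polynomials.

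\emph{Main obstacle: uniformity in $n$.} The potential contains both $\tfrac n2\lambda_n\sim n^2/8$ and $-n^2s/(2(1-2s))$, so increasing $n$ is not obviously favorable pointwise. For large $n$, I would scale out $n^2$: the potential becomes $n^2\bigl(\tfrac18-\tfrac{s}{2(1-2s)}\bigr)+O(n)$. This is positive on a fixed interval $[0,s_0]$ with $s_0<1/6$, so the Dirichlet term $\int v'^2=O(1)$ is dominated and the form is negative for all $n\geq N_0$ with explicit $N_0$. The finitely many remaining $n$ with $19\leq n<N_0$ are then checked by direct evaluation of the integral with a tuned $s_0$.

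The delicate case is $n=19$, which should be nearly critical given that Section~\ref{sec:proof} succeeds at $n=18$. A crude test function may fail there. If it does, I would instead use the variable $r=2s/(1-2s)$, in which the equation should become hypergeometric, and locate the first zero of $w$ through an explicit series bound.
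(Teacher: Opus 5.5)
Your Steps 1 and 2 are sound, and the Picone criterion in Step 3 is a valid sufficient condition. In fact, with $\nu=n/2$ and $\gamma_n^2=\tfrac n2\lambda_n+\tfrac{n^2}{4}$, your potential is $Q(s)=\tfrac n2\bigl(\lambda_n-\tfrac{ns}{1-2s}\bigr)=\gamma_n^2-\nu^2/(1-2s)$, and $w(s)=\tilde y(\gamma_n\sqrt{1-2s})$ is exactly the Bessel solution used in the paper.

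The gap is that the decisive part of the proposition is never carried out: showing that $w$ vanishes in $(0,1/2)$ for every $n\ge 19$. Moreover, the recipe you give for it cannot work where it matters. The point $s_n=(n-4)/(6(n-2))$ is precisely where $Q$ changes sign; it is the turning point $x=\nu$ of the Bessel equation. For $n=19$, integrate $((1-2s)w')'=-Qw$ from $w(0)=1$, $w'(0)=0$ over $[0,s_{19}]$, where $Q\ge 0$. This gives $w\ge 1-\int_0^{s_{19}}(1-2\sigma)^{-1}\int_0^{\sigma}Q\,d\tau\,d\sigma>0.6$ on that interval. So by your own Picone identity the form is nonnegative for \emph{every} test function when $s_0\le s_{19}$. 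The first zero of $w$ lies in the region $Q<0$, roughly at $s\approx 0.3$.

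Letting $s_0$ grow does not rescue your trial functions either. The form is negative iff $\int_0^{s_0}[(1-2s)v'^2+\nu^2v^2/(1-2s)]\,ds<\gamma_n^2\int_0^{s_0}v^2\,ds$, and the infimum of this Rayleigh quotient is $(j'_{\nu,1})^2$. At $n=19$ you must get below $\gamma_{19}^2\approx127.85$ when the optimum is $\approx126.38$, a margin of about $1.2\%$. Optimizing over $s_0$, $\cos(\pi s/(2s_0))$ only reaches about $128.3$ (near $s_0\approx0.32$), and $1-(s/s_0)^2$ only about $129.4$.

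Your fallback is also misdirected. In $r=2s/(1-2s)$ the equation has an irregular singular point at $r=-1$, so it is of Bessel (confluent) type, not Gauss hypergeometric. The missing idea is the substitution $x=\gamma_n\sqrt{1-2s}$. It turns your equation into Bessel's equation of order $n/2$ and gives the sharp criterion $\gamma_n>j'_{n/2,1}$. In your language, $v(s)=J_{n/2}(j'_{n/2,1}\sqrt{1-2s})$ makes the form equal to $\bigl((j'_{n/2,1})^2-\gamma_n^2\bigr)\int_0^{1/2}v^2\,ds$, after a harmless cutoff near $s=1/2$. This is what the paper's singular Sturm lemma encodes.

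Finally, your large-$n$ asymptotics is fine, but the range $19\le n<N_0$ cannot be left to unspecified case-by-case checks. The paper covers all $n\ge19$ at once by combining McCann's monotonicity of $j'_{\nu,1}/\nu$ with the monotonicity of $\gamma_n/n$. As a result, only the single comparison at $n=19$ is numerical.
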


The proof of Proposition~\ref{prop:main} is based on the following elementary one-sided Sturm comparison lemma. Its proof is the classical Wronskian argument for singular Sturm comparison; cf.~\cite[proof of Thm.~1(i)]{aharonov2010}. We will also use some basic facts about Bessel functions, for which we refer the reader to \cite{olver2010}.

\begin{lemma}[Sturm comparison]\label{lem:sturm}
Let $\gamma>0$, and let $V,\widetilde V$ be continuous on $(0,\gamma]$, with $V < \widetilde V$. Suppose that $y,\tilde y$ solve
\begin{equation*}
(xy')'+Vy=0, \qquad (x\tilde y')'+\widetilde V\tilde y=0
\end{equation*}
on $(0,\gamma]$, with
\begin{equation*}
y(\gamma)=\tilde y(\gamma)=1,\qquad y'(\gamma)=\tilde y'(\gamma)=0.
\end{equation*}
If
\begin{equation*}
0<y\leq 1\qquad \text{on }(0,\gamma],
\end{equation*}
then $\tilde y$ has a zero in $(0,\gamma)$.
\end{lemma}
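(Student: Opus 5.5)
I would use the classical Wronskian argument. Argue by contradiction and suppose $\tilde y>0$ on $(0,\gamma)$. Since $\tilde y(\gamma)=1>0$, this gives $\tilde y>0$ on $(0,\gamma]$. Consider the Wronskian-type quantity
\begin{equation*}
W(x)=x\bigl(y'(x)\tilde y(x)-y(x)\tilde y'(x)\bigr),\qquad x\in(0,\gamma].
\end{equation*}
The boundary conditions at $\gamma$ give $W(\gamma)=0$. Differentiating and using the two equations, the terms $xy'\tilde y'$ cancel, and we get
\begin{equation*}
W'=(xy')'\tilde y-y(x\tilde y')'=(\widetilde V-V)\,y\tilde y.
\end{equation*}
Under the contradiction hypothesis this is strictly positive on $(0,\gamma)$, since $y>0$, $\tilde y>0$ and $V<\widetilde V$. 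So $W$ is strictly increasing, and because $W(\gamma)=0$ we have $W<0$ on $(0,\gamma)$.

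Next I would turn the sign of $W$ into monotonicity of the ratio $q=\tilde y/y$, which is well defined because $y>0$. We have
\begin{equation*}
q'=\frac{\tilde y'y-\tilde y y'}{y^2}=-\frac{W}{xy^2}>0
\end{equation*}
on $(0,\gamma)$. Hence $q$ is strictly increasing, so $q(x)<q(\gamma)=1$, that is, $0<\tilde y<y\le 1$ on $(0,\gamma)$. We need a quantitative version of this. Fix $x_0\in(0,\gamma)$. For $x<x_0$, $W(x)\le W(x_0)=:-w_0<0$, because $W$ is increasing. Then
\begin{equation*}
q'(x)\ge \frac{w_0}{x\,y(x)^2}\ge \frac{w_0}{x},
\end{equation*}
where the second step uses $y\le 1$. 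This is the one place the hypothesis $y\le1$ enters. Integrating from $x$ to $x_0$ gives
\begin{equation*}
q(x)\le q(x_0)-w_0\log(x_0/x)\to-\infty \quad\text{as } x\to0^+.
\end{equation*}
This contradicts $q>0$. Therefore $\tilde y$ must vanish somewhere in $(0,\gamma)$; it cannot vanish at $\gamma$, where $\tilde y(\gamma)=1$.

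The main thing to get right is the singular endpoint $x=0$. Nothing is assumed about the behaviour of $y$, $\tilde y$ or $V$ there, so the argument must avoid evaluating $W$ at $0$. The route above does this by working only on $[x,x_0]$ with $x_0$ fixed, and by letting the logarithmic divergence of $\int dx/x$, together with the bounds $y\le1$ and $W\le -w_0$, force $q$ negative. I would also check the strictness of $W'>0$: it needs $y\tilde y>0$, which holds under the contradiction hypothesis. Then $W(x_0)<0$ holds strictly, so $w_0>0$ and the divergence really occurs.
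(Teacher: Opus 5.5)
Your proof is correct and is essentially the paper's argument. You use the same Wronskian with the opposite sign convention: you get $W<0$ and $q'=-W/(xy^2)$, where the paper has $W>0$ and $(\tilde y/y)'=W/(xy^2)$. The rest matches: the bound $q'\ge w_0/x$ on $(0,x_0)$ via $y\le 1$, and the logarithmic divergence as $x\to0^+$ contradicting $\tilde y/y>0$.
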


\begin{proof}
Suppose, towards a contradiction, that $\tilde y$ has no zero in $(0,\gamma)$; since $\tilde y(\gamma)=1$, we have $\tilde y>0$ on $(0,\gamma]$. Define 
\begin{equation*}
W=x(y\tilde y'-y'\tilde y),
\end{equation*}
so that 
\begin{equation*}
\mleft(\frac{\tilde y}{y}\mright)'=\frac{W}{xy^2}.
\end{equation*}
Using the equations for $y$ and $\tilde y$, we then obtain
\begin{equation*}
W'=(V-\widetilde V)y\tilde y<0.
\end{equation*}
Since $W(\gamma)=0$, it follows that $W>0$ on $(0,\gamma)$.

Fix $x_0\in(0,\gamma)$. Then, for every $x\in(0,x_0)$, since $W$ is decreasing,
\begin{equation*}
\mleft(\frac{\tilde y}{y}\mright)'\geq \frac{W(x_0)}{xy^2}\geq \frac{W(x_0)}{x}.
\end{equation*}
Integrating from $x$ to $x_0$ gives
\begin{equation*}
\frac{\tilde y(x_0)}{y(x_0)}-\frac{\tilde y(x)}{y(x)}\geq W(x_0)\log\frac{x_0}{x}.
\end{equation*}
The right-hand side tends to $+\infty$ as $x\to0^+$, whereas the left-hand side is bounded above by $\tilde y(x_0)/y(x_0)$, because $\tilde y/y>0$. This is a contradiction. Hence $\tilde y$ has a zero in $(0,\gamma)$.
\end{proof}

\begin{proof}[Proof of Proposition~\textup{\ref{prop:main}}]
Let $h$ be the maximal solution of
\begin{equation}\label{eq:riccati}
f'(s)= \lambda_n-\frac{n}{2(1-2s)}(2s-f(s)^2),\qquad f(0)=0.
\end{equation}
By the differential form of Grönwall's inequality, every function $f$ satisfying \eqref{eq:inequality} with $f(0)=0$ lies above $h$ on the interval where $h$ is defined. Consequently, it is enough to show that $h$ blows up to $+\infty$ at some $s<1/2$.

Equation~\eqref{eq:riccati} is a Riccati equation. The usual logarithmic derivative substitution, together with a suitable change of variable, transforms it into a Bessel equation. For our purposes, it is more convenient to use this transformation in the reverse direction. More precisely, set 
\begin{equation*}
\gamma_n= n\sqrt\frac{3(n-2)}{8(n-1)},\qquad \nu=\frac{n}{2},
\end{equation*}
and let $\tilde y$ be the solution of the Bessel equation
\begin{equation}\label{eq:bessel}
(x\tilde y')' +\mleft(x-\frac{\nu^2}{x} \mright)\tilde y=0,
\end{equation}
with
\begin{equation*}
\tilde y(\gamma_n)=1,\qquad \tilde y'(\gamma_n)=0.
\end{equation*}
As long as $\tilde y$ does not vanish, set
\begin{equation*}
x=\gamma_n\sqrt{1-2s},\qquad H(s) = \frac{2x}{n} \frac{\tilde y'(x)}{\tilde y(x)}.
\end{equation*}
Dividing \eqref{eq:bessel} by $\tilde y$, using
\begin{equation*}
\mleft( \frac{\tilde y'}{\tilde y}\mright)' = \frac{\tilde y''}{\tilde y} - \mleft( \frac{\tilde y'}{\tilde y}\mright)^2,\qquad \frac{dx}{ds}=-\frac{\gamma_n^2}{x},
\end{equation*} 
and substituting
\begin{equation*}
\frac{\tilde y'}{\tilde y} = \frac{nH}{2x},
\end{equation*}
gives
\begin{equation*}
H'(s)= \frac{2\gamma_n^2}{n} + \frac{n}{2(1-2s)}(H(s)^2-1).
\end{equation*}
Since $2\gamma_n^2/n = \lambda_n+n/2$, this is exactly \eqref{eq:riccati}. Moreover, $H(0)=0$, so uniqueness gives $H=h$ as long as $\tilde y$ remains nonzero. A nontrivial solution of \eqref{eq:bessel} cannot have a double zero, so any zero of $\tilde y$ is simple. It therefore suffices to show that $\tilde y$ has a zero in $(0,\gamma_n)$.

Let $J_\nu$ be the Bessel function of the first kind, which is a solution of the same Bessel equation on $(0,\infty)$, and let $j'= j'_{\nu,1}$ be the first positive zero of $J'_\nu$. We claim that if $\gamma_n>j'$, then $\tilde y$ has a zero in $(0,\gamma_n)$. Set $\alpha=j'/\gamma_n<1$, and define
\begin{equation*}
y(x)=\frac{J_\nu(\alpha x)}{J_\nu(j')}.
\end{equation*}
Then
\begin{equation*}
(x y')' +\mleft(\alpha^2 x-\frac{\nu^2}{x} \mright) y=0
\end{equation*}
and, since $\alpha\gamma_n=j'$,
\begin{equation*}
y(\gamma_n)=1,\qquad y'(\gamma_n)=0.
\end{equation*}
Moreover, $J_\nu$ is positive and strictly increasing on $(0,j']$. Hence
\begin{equation*}
0<y(x)\leq 1\qquad \text{for }0<x\leq \gamma_n.
\end{equation*}
We may therefore apply Lemma~\ref{lem:sturm} with
\begin{equation*}
V(x)=\alpha^2 x-\frac{\nu^2}{x},\qquad \widetilde V(x)= x-\frac{\nu^2}{x}.
\end{equation*}
Indeed,
\begin{equation*}
\widetilde V(x)-V(x) =(1-\alpha^2)x>0 \qquad \text{for }0<x<\gamma_n.
\end{equation*}
The lemma shows that $\tilde y$ has a zero in $(0,\gamma_n)$.

It remains to verify that $\gamma_n>j'$ for $n\geq 19$. By a theorem of McCann~\cite{mccann1977}, the function $\nu\mapsto j'_{\nu,1}/\nu$ is strictly decreasing. Therefore
\begin{equation*}
\frac{j'_{n/2,1}}{n}=\frac{1}{2}\frac{j'_{\nu,1}}{\nu}
\end{equation*}
is strictly decreasing in $n$. On the other hand, 
\begin{equation*}
\frac{\gamma_n}{n} = \sqrt\frac{3(n-2)}{8(n-1)}= \sqrt{\frac{3}{8} \left(1- \frac{1}{n-1}\right)}
\end{equation*}
is strictly increasing in $n$. The two quantities cross between $n=18$ and $n=19$, since
\begin{equation*}
\gamma_{18} = 18\sqrt{\frac{6}{17}} \approx 10.69359 < 10.71143 \approx j'_{9,1},
\end{equation*}
whereas
\begin{equation*}
\gamma_{19} = 19\sqrt{\frac{17}{48}} \approx 11.30726 > 11.24168 \approx j'_{19/2,1}.
\end{equation*}
It follows that $\gamma_n > j'_{n/2,1}$ for every $n\geq 19$. This proves the proposition.
\end{proof}
\bibliographystyle{amsplain}
\bibliography{references}

\providecommand{\bysame}{\leavevmode\hbox to3em{\hrulefill}\thinspace}
\providecommand{\MR}{\relax\ifhmode\unskip\space\fi MR }
\providecommand{\MRhref}[2]{%
  \href{http://www.ams.org/mathscinet-getitem?mr=#1}{#2}
}
\providecommand{\href}[2]{#2}
\begin{thebibliography}{10}

\bibitem{aharonov2010}
Dov Aharonov and Uri Elias, \emph{Singular {S}turm comparison theorems}, J.
  Math. Anal. Appl. \textbf{371} (2010), no.~2, 759--763. \MR{2670153}

\bibitem{aminov1973}
Ju.~A. Aminov, \emph{The exterior diameter of an immersed {R}iemannian
  manifold}, Math. USSR-Sb. \textbf{21} (1973), no.~3, 449--454.

\bibitem{aubin1998}
Thierry Aubin, \emph{Some nonlinear problems in {R}iemannian geometry},
  Springer Monographs in Mathematics, Springer-Verlag, Berlin, 1998.
  \MR{1636569}

\bibitem{chakerian1962}
G.~D. Chakerian, \emph{An inequality for closed space curves}, Pacific J. Math.
  \textbf{12} (1962), 53--57. \MR{140000}

\bibitem{chodosh2026}
Otis Chodosh and Chao Li, \emph{Immersions with small normal curvature},
  arXiv:2602.15728, 2026.

\bibitem{chow2026}
Tsz-Kiu~Aaron Chow and Jingbo Wan, \emph{Maximal normal curvature and
  {V}eronese rigidity}, arXiv:2607.00949, 2026.

\bibitem{chow2026II}
\bysame, \emph{Small normal curvature and three-manifold topology},
  arXiv:2608.18002, 2026.

\bibitem{fary1950}
Istv\'an F\'ary, \emph{Sur certaines in\'egalit\'es g\'eom\'etriques}, Acta
  Sci. Math. (Szeged) \textbf{12} (1950), 117--124. \MR{38090}

\bibitem{gromov1980}
Mikhael Gromov and H.~Blaine Lawson, Jr., \emph{Spin and scalar curvature in
  the presence of a fundamental group. {I}}, Ann. of Math. (2) \textbf{111}
  (1980), no.~2, 209--230. \MR{569070}

\bibitem{gromov2022}
Misha Gromov, \emph{Curvature, {K}olmogorov diameter, {H}ilbert rational
  designs and overtwisted immersions}, arXiv:2210.13256, 2022.

\bibitem{gromov2023}
\bysame, \emph{Isometric immersions with controlled curvatures}, J. Assoc.
  Math. Res. \textbf{1} (2023), no.~1, 1--15. \MR{4850217}

\bibitem{gromov2025}
\bysame, \emph{Lectures on immersions with controlled curvatures},
  arXiv:2511.01796, 2025.

\bibitem{gromov2022b}
\bysame, \emph{Scalar curvature, injectivity radius and immersions with small
  second fundamental forms}, J. Assoc. Math. Res. \textbf{3} (2025), no.~1,
  27--71. \MR{4876228}

\bibitem{hasanis1979}
Th. Hasanis and D.~Koutroufiotis, \emph{Immersions of bounded mean curvature},
  Arch. Math. (Basel) \textbf{33} (1979), no.~2, 170--171. \MR{557750}

\bibitem{jorge1981}
L.~Jorge and D.~Koutroufiotis, \emph{An estimate for the curvature of bounded
  submanifolds}, Amer. J. Math. \textbf{103} (1981), no.~4, 711--725.
  \MR{623135}

\bibitem{jorge1981b}
Luqu\'esio P. de~M. Jorge and Frederico~V. Xavier, \emph{An inequality between
  the exterior diameter and the mean curvature of bounded immersions}, Math. Z.
  \textbf{178} (1981), no.~1, 77--82. \MR{627095}

\bibitem{mccann1977}
Roger~C. McCann, \emph{Inequalities for the zeros of {B}essel functions}, SIAM
  J. Math. Anal. \textbf{8} (1977), no.~1, 166--170. \MR{422722}

\bibitem{mendes2025}
Ricardo A.~E. Mendes, \emph{Diameter and focal radius of submanifolds}, Ann.
  Global Anal. Geom. \textbf{68} (2025), no.~2, Paper No. 4, 6. \MR{4927774}

\bibitem{olver2010}
F.~W.~J. Olver and L.~C. Maximon, \emph{Bessel functions}, N{IST} handbook of
  mathematical functions, U.S. Dept. Commerce, Washington, 2010, pp.~215--286.
  \MR{2655350}

\bibitem{petrunin2024}
Anton Petrunin, \emph{Gromov's tori are optimal}, Geom. Funct. Anal.
  \textbf{34} (2024), no.~1, 202--208. \MR{4706446}

\bibitem{petrunin2024b}
\bysame, \emph{Veronese minimizes normal curvatures}, arXiv:2408.05909, 2024.

\bibitem{raffaelli2026}
Matteo Raffaelli, \emph{Normal curvature bounds for immersions into
  {R}iemannian domains}, arXiv:2606.03659, 2026.

\end{thebibliography}
\end{document}